\renewcommand{\phi}{\varphi}
\let\emptyset\varnothing
\newcommand{\C}{\mathbb{C}}
\newcommand{\N}{\mathbb{N}}
\newcommand{\Q}{{\mathbb Q}}
\newcommand{\R}{\mathbb{R}}
\newcommand{\Z}{\mathbb{Z}}
\providecommand{\keywords}[1]{\textbf{\textbf{Key words---}} #1}
\theoremstyle{plain}
\numberwithin{equation}{subsection}
\let\oldmarginpar\marginpar
\renewcommand\marginpar[1]{\-\oldmarginpar[\raggedleft\footnotesize #1]
{\raggedright\footnotesize #1}}
\newtheorem{teorema}{Theorem}[subsection]
\newtheorem{prop}[teorema]{Proposition}
\newtheorem{lemma}[teorema]{Lemma}
\theoremstyle{remark}
\newtheorem{oss}[teorema]{Remark}
\newtheorem{esempio}[teorema]{Example}
\theoremstyle{definition}
\newtheorem{definizione}[teorema]{Definition}
\newcounter{margin}
\DeclareMathOperator{\Hom}{Hom}
\DeclareMathOperator{\Gl}{GL}
\DeclareMathOperator{\Imm}{Imm}
\DeclareMathOperator{\PGl}{PGL}
\DeclareMathOperator{\tr}{tr}
\DeclareMathOperator{\spec}{Spec}
\DeclareMathOperator{\Stab}{Stab}
\DeclareMathOperator{\grr}{gr}
\DeclareMathOperator{\Plelog}{Log}
\begin{document}

\title{On the cohomology of character stacks \\ for non-orientable surfaces}

\author{ Tommaso Scognamiglio
\\ {\it Université Paris Cité/IMJ-PRG}
\\{\tt tommaso.scognamiglio@imj-prg.fr}
}

\maketitle
\pagestyle{myheadings}

\keywords{Character varieties, non-orientable surfaces, mixed Hodge structure \\ \textbf{Mathematics subject classification}: 14M35,14D23}

\begin{abstract} We give a counterexample to a formula suggested by the work of Letellier and Rodriguez-Villegas  \cite{NonOr} for the mixed Poincar\'e series of character stacks for non-orientable surfaces. The counterexample is obtained by an explicit description of these character stacks for (real) elliptic curves. 
\end{abstract}

\tableofcontents
\newpage

\section{Introduction}

 Let $K$ be an algebraically closed field, $r,k \geq 1$ be non-negative integers and $\mathcal{C}=(O_1,\dots,O_k)$ a $k$-tuple of semisimple orbits of $G=\Gl_n(K)$. Consider a couple $(\Tilde{\Sigma},\sigma)$ where  $\tilde{\Sigma}$ is a Riemann surface of genus $r-1$ and $\sigma: \tilde{\Sigma} \to \Tilde{\Sigma}$ an \textbf{antiholomorphic} involution without fixed points. The character stack $\mathcal{M}_{\mathcal{C}}^{\epsilon}$ associated to such a couple $(\Tilde{\Sigma},\sigma)$ and $\mathcal{C}$ is the stacky quotient \begin{equation}
    \mathcal{M}^{\epsilon}_{\mathcal{C}}\coloneqq\bigl[ \{D_1,\dots ,D_r \in G \ , \ Z_1 \in O_1,\dots ,Z_k \in O_k \ | \ D_1\theta(D_1)\cdots D_r\theta(D_r)Z_1\cdots Z_k=1  \}/G\bigr]
\end{equation}

where $\theta:G \to G$ is the Cartan involution $\theta(A)=(^t A)^{-1}$. For a more detailed definition and the relation between $\mathcal{M}_{\mathcal{C}}^{\epsilon}$ and representations of $\pi_1(\Tilde{\Sigma})$ see Section \cref{chstack}.

The stacks $\mathcal{M}_{\mathcal{C}}^{\epsilon}$ are deeply related to \textit{branes} inside the moduli space of Higgs bundles: the computation of cohomology and geometry of branes is a key part in understanding mirror symmetry for the Hitchin system. References about the subject can be found for example in \cite{ba1},\cite{ba2},\cite{GP},\cite{BGP}.

\vspace{5 pt}
Recently Letellier and Rodriguez-Villegas (see \cite[Theorem 4.6]{NonOr}) computed the E-series $E(\mathcal{M}_{\mathcal{C}}^{\epsilon},q)$ of these stacks over $\C$  when $\mathcal{C}$ is \textit{generic} (for a definition of generic $k$-tuples of orbits see Definition \ref{genericdefi}). The E-series is a specialization of the whole (compactly supported) mixed-Poincar\'e series $H_c(\mathcal{M}_{\mathcal{C}}^{\epsilon},q,t)$ obtained by plugging $t=-1$. For a definition of the mixed-Poincar\'e series see \cref{Poinca}.  E-series  give important information such as the number of irreducible components or non emptyness. 

The computation of \cite[Theorem 4.6]{NonOr} is obtained via reduction over $\mathbb{F}_q$ and point counting. The authors consider the $\mathbb{F}_q-$ stack $\mathcal{M}_{\mathcal{C},\mathbb{F}_q}^{\epsilon}$ and compute in an explicit way a rational function $Q(t)$ such that \begin{equation}
    Q(q^n)=\# \mathcal{M}_{\mathcal{C},\mathbb{F}_q}^{\epsilon}(\mathbb{F}_{q^n})
\end{equation}  
 In this case  there is an equality $E(\mathcal{M}_{\mathcal{C},\C}^{\epsilon},q)=Q(q)$, as shown for example in \cite[Theorem 2.8]{NonOr}. Surprisingly, the functions $Q(t)$ appearing in this context are very similar to the ones computing E-series of character stacks for Riemann surfaces.
 
 Consider $g \geq 0$, $k \geq 1$ and $\mathcal{C}=(O_1,\dots O_k)$ as above. The associated character stack $\mathcal{M}_{\mathcal{C}}$ for a Riemann surface $\Sigma$ of genus $g$ is the stacky quotient $$\mathcal{M}_{\mathcal{C}}\coloneqq\bigl[\{A_1,B_1, \dots , A_g,B_g \in G \ , \ X_1 \in O_1 , \dots ,X_k \in O_k \ |  [A_1,B_1]\cdots [A_g,B_g]X_1 \cdots X_k=1\}/G \bigr] .$$
 This can alternatively be described as the quotient stack  $$\bigl[\{\rho: \pi_1(\Sigma-\{x_1,\dots,x_k\}) \to G \ | \  \rho(y_i) \in O_i \}/G \bigr]$$ where $\{x_1,\dots ,x_k\} $ is a set of $k$ points of $\Sigma$ and each $y_i$ is a small loop around $x_i$. E-series for these stacks and \textit{generic} orbits were computed in \cite[Theorem 1.2.3]{HLRV}. As observed in \cite[Remark 1.5]{NonOr}, for $r=2h$  we have an equality $E(\mathcal{M}_{\mathcal{C}}^{\epsilon},q)=E(\mathcal{M}_{\mathcal{C}},q)$ where $\mathcal{M}_{\mathcal{C}}$ is associated to a Riemann surface of genus $h$. Even for $r$ odd, the formulas for the E-series of $\mathcal{M}_{\mathcal{C}}^{\epsilon}$  are very similar to those of $E(\mathcal{M}_{\mathcal{C}},q)$  (see \cref{EP} for more details).

There is a longstanding conjecture about the whole mixed Poincar\'e series of character stacks for Riemann surfaces \cite[Conjecture 1.2.1]{HLRV}: in \cite[Theorem 4.8]{NonOr} the authors verified that a completely analogous formula holds for $\mathcal{M}_{\mathcal{C}}^{\epsilon}$ for $r=1$ and $k=1$ (see \cref{EP} for more details). It would therefore have been natural to expect a similar formula to hold for all $r$.  The main result of this paper (see \cref{mainresult}) is an explicit description of some of these spaces and their cohomology in the case  $r=2$ giving a counterexample to the expected formula. The main theorem is:

\begin{teorema}
\label{mainteo}
Put $r=2$, $k=1$ and consider the orbit $\{e^{\frac{\pi d i}{n}}\}$ where $(n,d)=1$ and $d$ is even. Then $\mathcal{M}^{\epsilon}_{\mathcal{C}}$ is a $\mu_2$-gerbe over $\C^*$. In particular, its mixed Poincar\'e series is $$H_c(\mathcal{M}_{\mathcal{C}}^{\epsilon},q,t)=qt^2+t $$ 
\end{teorema}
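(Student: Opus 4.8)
The plan is to write down the defining equation of $\mathcal{M}_{\mathcal{C}}^{\epsilon}$ explicitly in this very small case and simplify it by elementary linear algebra. With $r=2$, $k=1$ and $O_1=\{e^{\pi d i/n}\,\mathrm{Id}\}$ a central orbit, the stack is
$$\mathcal{M}_{\mathcal{C}}^{\epsilon}=\bigl[\{D_1,D_2\in G\ |\ D_1\theta(D_1)D_2\theta(D_2)=e^{-\pi d i/n}\,\mathrm{Id}\}/G\bigr],$$
where $\theta(A)=(^tA)^{-1}$. First I would analyse a single factor $M:=D\theta(D)=D(^tD)^{-1}$: such matrices are exactly those of the form $M=A(^tA)^{-1}$, and by the theory of the symmetric bilinear form / Cartan involution on $\Gl_n$, they are characterized among all matrices by a cohomological condition (e.g.\ $M$ lies in the image of the ``twisted squaring'' map $A\mapsto A\theta(A)$). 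The key structural fact I expect to use is that $D\mapsto D\theta(D)$ surjects onto $\{M\ |\ M\,{}^t M=\mathrm{Id}\ \text{in a suitable sense}\}$ with fibers that are torsors under the fixed-point subgroup $O_n=\{A\ |\ A\,{}^tA=\mathrm{Id}\}$; more precisely $D\theta(D)=\mathrm{Id}$ iff $D\in O_n(K)$.

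Next I would exploit genericity/coprimality. Because $(n,d)=1$, the scalar $e^{-\pi d i/n}$ is an $n$-th root of unity that is not an $m$-th root for any proper divisor, which forces any solution $(D_1,D_2)$ to be irreducible in the appropriate sense, so the stabilizer of every point is exactly the center $Z(G)=\mathbb{C}^*\,\mathrm{Id}$ intersected with the constraints — and the hypothesis that $d$ is \emph{even} is what pins the relevant finite part of the stabilizer down to $\mu_2$ rather than something larger. Concretely I would show: (i) the underlying coarse space of $\mathcal{M}_{\mathcal{C}}^{\epsilon}$ is $\mathbb{C}^*$ (parametrized, say, by an eigenvalue or a determinant-type invariant of $D_1$ after using the $G$-action to put $D_2$ in a normal form); and (ii) the automorphism group of every point is $\mu_2$, constant over the base, so that $\mathcal{M}_{\mathcal{C}}^{\epsilon}$ is a $\mu_2$-gerbe over $\mathbb{C}^*$. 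For the normal form in (i): act by $G$ to bring $D_2\theta(D_2)$ to a standard representative of its conjugacy class; the equation then reads $D_1\theta(D_1)=e^{-\pi d i/n}(D_2\theta(D_2))^{-1}$, and counting the remaining moduli after quotienting by the residual stabilizer should leave a one-dimensional family isomorphic to $\mathbb{C}^*$.

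Finally, from the gerbe description the cohomology is immediate: a $\mu_2$-gerbe $\mathcal{G}\to\mathbb{C}^*$ has the same rational (indeed same $\ell$-adic with $\ell\neq 2$, and same complex) cohomology as $\mathbb{C}^*$ itself — the gerbe only affects the stacky structure, not $H^*_c(-,\mathbb{Q})$ — and carries the pure Hodge structure of $\mathbb{C}^*$, namely $H^1_c$ of weight/Hodge type giving the term $t$ and $H^2_c$ giving $qt^2$. Hence $H_c(\mathcal{M}_{\mathcal{C}}^{\epsilon},q,t)=qt^2+t$, matching $E(\mathcal{M}_{\mathcal{C}}^{\epsilon},q)=q-1$ at $t=-1$ as a consistency check.

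The main obstacle I anticipate is step (i): rigorously identifying the coarse space as $\mathbb{C}^*$: one must choose the right $G$-invariant, put the pair $(D_1,D_2)$ into a genuinely canonical form under the full $G$-action (not just a Borel or torus), and verify both that the resulting parameter ranges over all of $\mathbb{C}^*$ and that no two parameter values give isomorphic points. The representation-theoretic bookkeeping of the twisted equation $D\theta(D)=M$ — in particular controlling exactly which $M$ arise and with what $O_n$-ambiguity — is where the real work lies; the gerbe/cohomology conclusion in steps (ii)–(iii) is then essentially formal.
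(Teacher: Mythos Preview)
Your approach is genuinely different from the paper's, and the gap you yourself flag in step (i) is real. The paper does \emph{not} attempt to normalize the pair $(D_1,D_2)$ directly inside the twisted equation. Instead it passes through the orientable double cover: it proves a general isomorphism $M^{\epsilon}_{n,d}\cong M_{n,d}^{\sigma,+}$, where $M_{n,d}$ is the ordinary twisted character variety of the genus $r-1$ Riemann surface $\tilde\Sigma$ and $\sigma$ is the involution $\tilde\rho\mapsto\theta\tilde\rho(\sigma_*)$. For $r=2$ one then invokes the known fact (Hausel--Rodriguez-Villegas) that $M_{n,d}\cong\C^*\times\C^*$ via $(A,B)\mapsto(z,w)$ with $A^n=zI_n$, $B^n=wI_n$, computes $\sigma(z,w)=(z,w^{-1})$ from the explicit formulas $\sigma_*(a)=x_1a^{-1}$, $\sigma_*(b)=b$, and reads off $M_{n,d}^{\sigma}=\C^*\sqcup\C^*$, hence $M_{n,d}^{\sigma,+}\cong\C^*$. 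The $\mu_2$-gerbe statement follows because irreducibility of $\tilde\rho$ (from $(n,d)=1$) forces $\Stab_G(\rho)=\{\pm I_n\}$. This route completely bypasses any normal-form analysis of $D\theta(D)$.

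A few concrete errors in your sketch: the fibre of $D\mapsto D\theta(D)=D({}^tD)^{-1}$ is \emph{not} an $O_n$-torsor. If $D_1\theta(D_1)=D_2\theta(D_2)$ then $S:=D_2^{-1}D_1$ satisfies $S={}^tS$, so the ambiguity is by \emph{symmetric} matrices (equivalently, $D\theta(D)=I$ iff $D={}^tD$, not $D\in O_n$). Second, the hypothesis $d$ even has nothing to do with pinning the stabilizer to $\mu_2$: it is a non-emptiness condition, since $\det(D_j\theta(D_j))=1$ forces $\det(Z_1)=(e^{\pi d i/n})^n=e^{\pi d i}=1$. The stabilizer is $\mu_2$ simply because $(n,d)=1$ makes $\tilde\rho$ irreducible and the centre $\mathbb{G}_m$ acts on $D_j$ via $\lambda\mapsto\lambda^2$. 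Your final paragraph on cohomology is fine and matches the paper's Remark on the $\mu_2$-gerbe.
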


 To prove Theorem \ref{mainteo} we need some results of independent interest concerning the geometry of the spaces $\mathcal{M}_{\mathcal{C}}^{\epsilon}$ for $k=1$ and the orbit $\{e^{\frac{\pi d i}{n}}\}$ (see \cref{geomd}).
 
 To summarize these results, let $M_{n,d}^{\epsilon}$ be the GIT quotient associated to $\mathcal{M}_{\mathcal{C}}^{\epsilon}$ and $M_{n,d}$ be the GIT quotient of the character stack associated to the Riemann surface $\Tilde{\Sigma}$ (of genus $r-1$) for $k=1$ and the orbit $\{e^{\frac{2 \pi di}{n}}\}$. There is an involution on $M_{n,d}$, which we denote again by $\sigma$,  which sends a representation $\tilde{\rho} \in M_{n,d}$ to $\sigma(\tilde{\rho})=\theta(\tilde{\rho})(\sigma_*)$ (for more details and a definition of $\sigma_{*}$ see \cref{prel1},\cref{chstack}). In \cref{geomd} we show that:

\begin{teorema}
\label{fattapr}
If $r$ is odd, the fixed point locus $M_{n,d}^{\sigma}$ is isomorphic to $M_{n,d}^{\epsilon}$. If $r$ is even, there is an open-closed decomposition $M_{n,d}^{\sigma}=M_{n,d}^{\sigma,+} \bigsqcup M_{n,d}^{\sigma,-}$ such that  $M_{n,d}^{\sigma,+} \cong M_{n,d}^{\sigma,-} \cong M_{n,d}^{\epsilon}$.
\end{teorema}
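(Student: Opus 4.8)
The plan is to present both sides as moduli of honest (twisted) representations and compare them by restriction to the orientation cover. Recall from \cref{chstack} that a point of $\mathcal{M}_{\mathcal{C}}^{\epsilon}$ is the conjugacy class of a homomorphism $\widehat{\rho}$ from $\pi_1$ of the (punctured) quotient surface $\tilde{\Sigma}/\sigma$ to the extended group $G\rtimes_{\theta}\langle\tau\rangle$ lying over the sign map $\pi_1(\tilde{\Sigma}/\sigma)\twoheadrightarrow\mathbb{Z}/2$, with puncture monodromy in $O_1$; in the presentation underlying the definition of $\mathcal{M}_{\mathcal{C}}^{\epsilon}$ one has $\widehat{\rho}(c_i)=(D_i,\tau)$ on the $i$-th crosscap generator $c_i$. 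Restricting $\widehat\rho$ to $\pi_1(\tilde\Sigma)=\ker(\pi_1(\tilde\Sigma/\sigma)\to\mathbb{Z}/2)$ yields an honest $G$-representation $\rho$, classified (after passing to GIT quotients) by $M_{n,d}$, so one obtains a morphism $\bar r\colon M_{n,d}^{\epsilon}\to M_{n,d}$. First I would check that $\bar r$ factors through $M_{n,d}^{\sigma}$: using that conjugation by $c_1$ on $\pi_1(\tilde\Sigma)$ represents the deck transformation $\sigma_*$, conjugating $\rho$ by $\widehat\rho(c_1)$ gives $\theta\circ\rho\circ\sigma_*=\Ad(\theta(D_1))\circ\rho$, i.e. $\sigma(\rho)\cong\rho$.

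Next I would produce the sign responsible for the decomposition. For $\rho\in M_{n,d}^{\sigma}$ choose $g$ with $\theta\circ\rho\circ\sigma_*=\Ad(g)\circ\rho$; such $g$ is unique up to a scalar (Schur, on the stable locus), so $\theta(g)g$ is canonical, and iterating the equivariance relation — using $\sigma_*(w_0)=w_0$, where $w_0\in\pi_1(\tilde\Sigma)$ satisfies $\sigma_*^2=\Ad(w_0)$ (the square of a crosscap generator) — forces $\theta(g)g=\eta(\rho)\,\rho(w_0)$ with $\eta(\rho)\in\{\pm1\}$. Since the admissible $g$ form a $\C^*$-torsor over $M_{n,d}^{\sigma}$, the expression $\eta(\rho)=\tfrac1n\tr\!\bigl(\theta(g)g\,\rho(w_0)^{-1}\bigr)$ is a well-defined regular $\{\pm1\}$-valued function, so $M_{n,d}^{\sigma}=M_{n,d}^{\sigma,+}\sqcup M_{n,d}^{\sigma,-}$ with $M_{n,d}^{\sigma,\pm}=\{\eta=\pm1\}$ open and closed. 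Moreover $\bar r$ lands in $M_{n,d}^{\sigma,+}$: for $\rho$ coming from a $\widehat\rho$ one may take $g=\theta(D_1)$, whence $\theta(g)g=D_1\theta(D_1)=\rho(c_1^2)=\rho(w_0)$, i.e. $\eta(\rho)=1$.

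I would then show that $\bar r\colon M_{n,d}^{\epsilon}\to M_{n,d}^{\sigma,+}$ is an isomorphism. Injectivity: an extension of a given $\rho$ is determined by $\widehat\rho(c_1)=(D_1,\tau)$; equivariance forces $D_1\in\theta(g)\cdot Z(\rho)$, and since on the stable locus $Z(\rho)$ consists of scalars — all of which are squares in $\C^*$ and hence realized by conjugating $\widehat\rho$ by a central element of $G$ — the extension is unique up to conjugacy, and $\bar r$ is injective on tangent spaces. Surjectivity onto $M_{n,d}^{\sigma,+}$: given $\rho$ with $\eta(\rho)=1$, set $\widehat\rho(c_1)=(\theta(g),\tau)$ and $\widehat\rho(c_i)=\widehat\rho(c_1)\,\rho(c_1^{-1}c_i)$; because $\eta(\rho)=1$ one checks $D_i\theta(D_i)=\rho(c_i^2)$ for all $i$, so $\widehat\rho$ is a genuine twisted representation restricting to $\rho$, and its puncture monodromy $\prod_iD_i\theta(D_i)$ lies in $O_1$ by the matching of the orbit data on the two sides ($\{e^{\pi di/n}\}$ below, $\{e^{2\pi di/n}\}$ above). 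As both $M_{n,d}^{\epsilon}$ and $M_{n,d}^{\sigma,+}$ are smooth of the same dimension on the stable locus and $\bar r$ is an injective immersion there, it is an isomorphism; strictly semisimple points are handled by passing to closures.

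It remains to treat $M_{n,d}^{\sigma,-}$, and this is where the parity of $r$ enters. The $\{\pm1\}$-valued character $\chi$ of $\pi_1(\tilde\Sigma)$ with $\chi(c_i^2)=-1$ (trivial on the remaining Schreier generators) is $\sigma_*$-invariant, twisting $\rho\mapsto\chi\cdot\rho$ flips the sign ($\eta(\chi\cdot\rho)=-\eta(\rho)$), and it multiplies the relevant puncture monodromy by $\chi(w_0)^{\text{(boundary)}}=(-1)^r$. Hence for $r$ even, $\rho\mapsto\chi\cdot\rho$ is an automorphism of $M_{n,d}$ preserving $M_{n,d}^{\sigma}$ and interchanging $M_{n,d}^{\sigma,+}$ with $M_{n,d}^{\sigma,-}$, so $M_{n,d}^{\sigma,-}\cong M_{n,d}^{\sigma,+}\cong M_{n,d}^{\epsilon}$ and the even case is done. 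For $r$ odd the twist moves the orbit, so this is unavailable; what must be proved is that $M_{n,d}^{\sigma,-}=\emptyset$, equivalently $\eta\equiv1$ on $M_{n,d}^{\sigma}$, and I expect this — not anything in the even case — to be the main obstacle. A natural attack is to observe that $\eta(\rho)=-1$ constrains $\det\rho(w_0)$ (take determinants in $\theta(g)g=\eta(\rho)\rho(w_0)$) and then to contradict this using the $\sigma_*$-equivariance of the character $\det\rho$ together with the constraint $\prod_i\det\rho(c_i^2)=1$ coming from $d$ even; making this precise, and covering the residual cases it leaves open, is where the real work lies.
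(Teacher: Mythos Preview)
Your argument for $M_{n,d}^{\epsilon}\cong M_{n,d}^{\sigma,+}$ is essentially the paper's: both restrict a twisted representation to the orientation cover, introduce the $\pm 1$ sign by comparing $\theta(h)h$ with $\rho(w_0)$ (real versus quaternionic), and use that the admissible intertwiners form a $\mathbb{G}_m$-torsor. The paper isolates this last point as a separate lemma (the ``forget the intertwiner'' map is an \'etale-locally-trivial principal $\mathbb{G}_m$-bundle over the $\sigma$-fixed locus) and then descends to GIT quotients; your injective-immersion-plus-dimension count is a reasonable shortcut on the stable locus, though your aside about ``strictly semisimple points'' is vacuous here since $(n,d)=1$ forces every $\tilde\rho\in M_{n,d}$ to be irreducible.

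The two parity-dependent claims are where you and the paper diverge. For $r$ even, the paper produces $M_{n,d}^{\sigma,+}\cong M_{n,d}^{\sigma,-}$ by tensoring with a rank-one \emph{quaternionic} representation $\tau\in M_{1,0}^{\sigma,-}$, whose existence for even $r$ is imported from the classification of real/quaternionic line bundles on $(\tilde\Sigma,\sigma)$. Your $\{\pm 1\}$-valued character $\chi$ is a more elementary variant of the same idea, but you still owe the verification that $\chi$ can be chosen $\sigma_*$-invariant, trivial on every puncture loop (so that the twist stays inside $M_{n,d}$, whose monodromy orbit is a single central point), and with $\chi(w_0)=-1$; this is doable, but it is not what your sentence about ``multiplying the puncture monodromy by $(-1)^r$'' actually says.

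For $r$ odd you correctly flag $M_{n,d}^{\sigma,-}=\emptyset$ as the real obstacle and concede your determinant sketch is incomplete. The paper does not settle this on the representation side at all: it passes through non-abelian Hodge to Higgs bundles, takes the determinant of a hypothetical stable quaternionic Higgs bundle (still quaternionic because $n$ is odd, since $(n,d)=1$ with $d$ even), and then invokes the topological obstruction that $(\tilde\Sigma,\sigma)$ carries no quaternionic line bundle when $r$ is odd. So the genuine gap in your proposal is precisely this external input; a purely character-theoretic proof along the lines you sketch would be new relative to what the paper does.
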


The Theorem \ref{fattapr} (and the others in \cref{geomd}) are probably known to the experts  but we could not locate a reference in the literature. We review them here for the sake of completeness. In the paragraph \cref{mainresult}, we describe the variety $M_{n,d}^{\sigma}$ for $r=2$ and find an isomorphism $M_{n,d}^{\sigma,+} \cong \C^*$ which allows to prove Theorem \ref{mainteo}.

\paragraph{Acknowledgements.}

I would like to thank Emmanuel Letellier for bringing this subject to my attention. It is a pleasure to thank also Florent Schaffhauser for many useful discussions about the topics dealt in this paper.

\section{Preliminaries}

\subsection{Fundamental groups of punctured non-orientable surfaces}
\label{prel1}
Let $\Tilde{\Sigma}$ be a Riemann surface of genus $g$ and $\sigma: \Tilde{\Sigma} \to \Tilde{\Sigma}$ be an antiholomorphic involution $\sigma$ without fixed points. The quotient $\Sigma \coloneqq \Tilde{\Sigma}/\sigma$ is endowed with the structure of a non-orientable surface: topologically $\Sigma$ is the connected sum of $r \coloneqq g+1$ real projective planes. We denote by $p$ the quotient map $p:\Tilde{\Sigma} \longrightarrow \Sigma .$ 

Let $S=\{z_1,\dots ,z_k\}$ be a set of $k$ points on $\Sigma$. We fix also a basepoint $x_0 \in \Sigma - S$ and a point $\tilde{x_0}$ in the fiber  $p^{-1}(x_0)$. We denote the fundamental groups of $\Sigma-S,\tilde{\Sigma}-p^{-1}(S)$ with basepoints $x_0,\tilde{x}_0$ by $\pi_1(\Sigma - S)$ and $\pi_1(\Tilde{\Sigma}-p^{-1}(S))$ respectively. The map $p$ induces an injective morphism $p_*:\pi_1(\Tilde{\Sigma}-p^{-1}(S)) \to \pi_1(\Sigma - S) .$   We fix also a path $\lambda_{\sigma}:\tilde{x_0} \to \sigma({\tilde{x_0}})$ inside $\Tilde{\Sigma}-p^{-1}(S)$: its projection determines a closed path $p(\lambda_{\sigma}) \in \pi_1(\Sigma - S) .$

Finally, we denote by $\sigma_*$ the morphism on $\pi_1(\tilde{\Sigma}-p^{-1}(S))$ given by $$\sigma_*(c)=\lambda_{\sigma}^{-1}\sigma(c)\lambda_{\sigma} .$$

Notice that the morphism $\sigma_*$ is not an involution in general. More precisely,  $\sigma_*^2$ is the conjugation by the element $\sigma(\lambda_{\sigma})\lambda_{\sigma} \in \pi_1(\Tilde{\Sigma}-p^{-1}(S))$ which in general is not the identity.
We recall that there are explicit presentations of the above fundamental groups: \begin{equation}\label{present1}\Pi\coloneqq\pi_1(\Sigma-S)=\langle d_1^2 \cdots d_r^2 z_1 \cdots z_k=1\rangle \end{equation} and \begin{equation}\label{preent2} \tilde{\Pi}\coloneqq \pi_1(\Tilde{\Sigma}-p^{-1}(S))=\langle[a_1,b_1]\cdots[a_g,b_g]x_1 \cdots x_{2k}=1\rangle .\end{equation}

\vspace{10 pt}

\begin{esempio}
\label{ellip}
 Let us look at the case of $r=2$ and $k=1$. We consider the elliptic curve $\tilde{E}$ associated to the lattice $\left<1,i\right>\subseteq \C$ i.e $\tilde{E} \cong \C/\left<1,i\right> $ and let $\pi$ be the projection $\pi: \C \to E$. Let $\sigma:\Tilde{E} \rightarrow \Tilde{E} $ be the involution without fixed points defined by $ \sigma(z)= \bar{z}+\frac{1}{2} $ and $p: \Tilde{E } \to E \coloneqq \Tilde{E}/\sigma $ be the associated quotient.
 
 We fix a point $z_1 \in E$ and we let its preimage in $\Tilde{E}$ be   $p^{-1}(z_1)=\{y_1,y_2\}$. Put $x_0=p(0)$ and $\tilde{x_0}=0$ as base points and $\lambda_{\sigma}=\pi(\gamma(t))$  where $\gamma(t)=\dfrac{1}{2}t .$ Denoting by $a,b$ the paths $a(t)=\pi(it) $ and $b(t)=\pi(t) $, the fundamental group $\pi_1(\Tilde{E}-\{y_1,y_2\})$ admits the presentation \begin{equation}
    \label{rel}
   \langle b^{-1}a^{-1}ba  =x_2x_1\rangle.
\end{equation} 
where  $x_1,x_2$ are two loops around  $y_1,y_2$. It is not difficult to compute that \begin{equation}\label{eqell1} \sigma_*(a)=x_1a^{-1}\end{equation} and \begin{equation}
    \label{eqell2}\sigma_*(b)=b
\end{equation} Moreover, the following equalities hold: $\lambda_{\sigma}^{-1}\sigma(x_1)\lambda_{\sigma}=ax_1^{-1}x_2^{-1}x_1a^{-1} $ and $ \lambda_{\sigma}^{-1}\sigma(x_2)\lambda_{\sigma}=bax_1^{-1}a^{-1}b^{-1} .$

\end{esempio}

\subsection{Character stacks for non-orientable surfaces}
\label{chstack}

We fix an algebraically closed field $K$ (which for us will be either $\C$ or $\overline{\mathbb{F}}_q$). We denote by $G$ the general linear group $\Gl_n(K)$ and by $\theta$ the Cartan involution $g \to (^t g)^{-1}$. The corresponding semidirect product will be denoted by $G^+=G \rtimes \left< \theta \right>$. Let $\mathcal{C}=(O_1, \dots, O_k)$ be a $k$-tuple of semisimple orbits of $G$. We consider the variety $$\Hom^{\epsilon}_{\mathcal{C}}(\Pi,G^+)\coloneqq\{\rho: \pi_1(\Sigma-S) \to G^+ \ | \ p(\rho(d_j))=\theta \ \text{and} \ \rho(z_i) \in h(C_i) \  \text{for all} \ i,j\} $$
where $p:G\to \left<\theta \right>$ is the natural projection and $h: G \to G^+ $ the natural inclusion. Given the explicit presentation of $\pi_1(\Sigma-S)$ we can rewrite  $\Hom^{\epsilon}_{\mathcal{C}}(\Pi,G^+)$ as 
$$\Hom^{\epsilon}_{\mathcal{C}}(\Pi,G^+)=\{D_1, \dots,D_r \in G \ \text{and} \ Z_1 \in O_1 ,\dots ,Z_k \in O_k \ | \ D_1\theta(D_1)\cdots D_r\theta(D_r)Z_1\cdots Z_k=1  \} .$$

 The variety $\Hom^{\epsilon}_{\mathcal{C}}(\Pi,G^+)$ is endowed with a $G$-action defined by: \begin{equation}
    \label{action1}
    g \cdot D_i=gD_i\,{^t}g \hspace{10 pt} g\cdot Z_i=gZ_ig^{-1}.
\end{equation}

The  character stacks we will consider are the quotient stacks $$\mathcal{M}_{\mathcal{C}}^{\epsilon}=[\Hom^{\epsilon}_{\mathcal{C}}(\Pi,G^+)/G] .$$

As $\Hom^{\epsilon}_{\mathcal{C}}(\Pi,G^+)$ is affine and $G$ is reductive, we can also consider the GIT quotient $M_{\mathcal{C}}^{\epsilon}\coloneqq \Hom_{\mathcal{C}}^{\epsilon}(\Pi,G^+)/\!/G$ and the universal map $q:\mathcal{M}_{\mathcal{C}}^{\epsilon} \to M_{\mathcal{C}}^{\epsilon} .$

\vspace{10 pt}

The stacks $\mathcal{M}^{\epsilon}_{\mathcal{C}}$ admit an alternative description in terms of the so-called real $\sigma$-invariant representations (which can be found in \cite[Section 2]{cheng}\cite[Section 3, 3.2]{FS} and \cite[Remark 4.2]{NonOr}). A representation $\rho \in \Hom^{\epsilon}_{\mathcal{C}}(\Pi,G^+)$ gives by restriction a representation $\tilde{\rho}: \Tilde{\Pi} \to G$ such that the following diagram commutes \begin{equation}
\xymatrix{1\ar[r] 
&\tilde{\Pi}\ar[r]^{p_*}\ar[d]^{\tilde{\rho}}
&\Pi\ar[r]^-\chi\ar[d]^{\rho}&\langle
  \theta\rangle\ar[r]\ar[d]^{{\rm Id}}&1\\ 
1\ar[r]& G\ar[r]^\iota& G^+\ar[r]^\pi
&\langle \theta\rangle\ar[r]&1} 
\label{diag}\end{equation}

 It is therefore natural to ask conversely which representations $\tilde{\rho}$ of $\Tilde{\Pi}$ can be lifted to a morphism $\rho:\Pi \to G^+$ which makes the diagram (\ref{diag}) commute. To answer to the question, it is necessary to precisely describe monodromies around the punctures, as explained in \cite[Remark 4.2]{NonOr}. 

Let $p^{-1}(S)=\{y_{1,1},\dots ,y_{k,1},y_{1,2},\dots y_{k,2}\}$ where $\sigma(y_{i,1})=y_{i,2}$ for $i=1,\dots,k$.
\begin{comment}

Fix, for each $i=1, \dots k$, an  open disk $D_i$ around each point of $S$ trivializing $p$ and a path $\lambda_i:x_0 \to z_i$. We can then choose the $z_i$'s in the presentation \ref{present1} to be $\lambda_i^{-1}l_i\lambda_i$, where $l_i$ is a loop in $D_i$ around $x_i$. Each path $\lambda_i$ has a unique lift $\tilde{\lambda}_i$ from $\tilde{x_0}$ to a point $\tilde{x}_{i,1} \in p^{-1}(S) $. Notice that $p^{-1}(D_i)$ is given by two disjoint open disks $\tilde{D}_{i,1} \bigsqcup \tilde{D}_{i,2}$ with $\tilde{x}_{i,1} \in \tilde{D}_{i,1}$: put $\sigma(\tilde{x}_{i,1})=\tilde{x}_{i,2} \in \tilde{D}_{i,2}$. 

Lifting the path $l_i$ to a path $\tilde{l_i}$ inside $\tilde{D}_{i,1}$ around $\tilde{x}_{i,1}$, we get a path $x_{i,1}=\Tilde{\lambda_i}^{-1}\tilde{l_i}\tilde{\lambda_i} \in \Tilde{\Pi} $. There is another path which is natural to consider: $y_{i,2}=\sigma_*(x_{i,1})$. Notice that $p_*(y_{i,2})=\gamma_{\sigma}^{-1}z_i\gamma_{\sigma}$ and  $p_*(x_{i,1})=z_i$.
\end{comment}
We can rewrite  the standard presentation \ref{preent2} of  $\Tilde{\Pi}$ as $$\langle[a_1,b_1]\cdots [a_g,b_g]x_{1,1}\cdots x_{k,1}x_{1,2}\cdots x_{k,2}=1 \rangle . $$ where each $x_{i,j}$ is a path around $y_{i,j}$. Let $\Tilde{\mathcal{C}}$ be the  $2k$-tuple $\Tilde{\mathcal{C}}=(O_1,\dots ,O_k,O_1,\dots O_k)$ and  $\Hom_{\tilde{\mathcal{C}}}(\tilde{\Pi},G)$ be the affine variety $$\Hom_{\mathcal{\Tilde{C}}}(\tilde{\Pi},G)\coloneqq\{\tilde{\rho}:\Tilde{\Pi} \to G \ | \ \tilde{\rho}(x_{i,1}) \in O_i \ \text{and} \ \tilde{\rho}(x_{i,2}) \in O_i \} .$$

For a representation $\tilde{\rho} \in \Hom_{\mathcal{\Tilde{C}}}(\tilde{\Pi},G)$ we say that $\tilde{\rho}$ is $\sigma$-invariant if  $\tilde{\rho} \cong \theta \tilde{\rho}(\sigma_*) .$ This is equivalent to asking  for the existence of  an element $h_{\sigma} \in G$ which verifies \begin{equation}\label{1} h_{\sigma} \tilde{\rho} h_{\sigma}^{-1}=\theta \tilde\rho(\sigma_*). \end{equation} 

\begin{definizione}
Given a $\sigma$-invariant $\tilde{\rho} \in \Hom_{\mathcal{\Tilde{C}}}(\tilde{\Pi},G)$, we say that the representation $\tilde{\rho}$ is \textit{real} if there exists $h_{\sigma}$ as in eq.(\ref{1}) such that \begin{equation}\label{2}\tilde{\rho}( \sigma(\lambda_{\sigma})\lambda_{\sigma}) =h_{\sigma}^{-1}\theta(h_{\sigma}^{-1}). \end{equation}

We say that $\Tilde{\rho}$ is \textit{quaternionic} if there exists $h_{\sigma}$ as in eq.(\ref{1}) such that $\tilde{\rho}( \sigma(\lambda_{\sigma})\lambda_{\sigma}) =-h_{\sigma}^{-1}\theta(h_{\sigma}^{-1})$.
\end{definizione}

If the conditions of Equations (\ref{1}),(\ref{2}) are satisfied, the couple $(\tilde{\rho},h_{\sigma})$ can be extended to a map $\rho \in \Hom^{\epsilon}_{\mathcal{C}}(\Pi,G^+)$ such that the diagram (\ref{diag}) commutes. Let $\Tilde{\mathcal{U}}_{\mathcal{C}}$ be the variety \begin{equation}
   \label{3} 
 \Tilde{\mathcal{U}}_{\mathcal{C}}=\{(\tilde{\rho},h_{\sigma}) \in \Hom_{\tilde{\mathcal{C}}}(\tilde{\Pi},G) \times G \ \text{which verify} \ \text{Equations} \ \ref{1},\ref{2} \} .\end{equation} The variety $\Tilde{\mathcal{U}}_{\mathcal{C}} $ is endowed with a $G$-action defined by \begin{equation}
\label{action}g \cdot (\tilde{\rho},h_{\sigma})=(g\tilde{\rho}g^{-1},\theta(g)hg^{-1}) .\end{equation} The arguments above imply the following Proposition

\begin{prop}
There is an isomorphism of quotient stacks $$\mathcal{M}^{\epsilon}_{\mathcal{C}} \cong [\Tilde{\mathcal{U}}_{\mathcal{C}}/G] .$$ 
\end{prop}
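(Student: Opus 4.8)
The plan is to upgrade the correspondence sketched just before the statement into a $G$-equivariant isomorphism of affine varieties $\Psi\colon\Hom^{\epsilon}_{\mathcal{C}}(\Pi,G^+)\xrightarrow{\ \sim\ }\Tilde{\mathcal{U}}_{\mathcal{C}}$, and then to descend it to quotient stacks. Concretely, $\Psi$ sends $\rho$ to $(\tilde\rho,h_\sigma)$, where $\tilde\rho\coloneqq\iota^{-1}\circ\rho\circ p_*$ — well defined because $p_*(\Tilde\Pi)=\ker\chi$ and $\pi\circ\rho=\chi$ by \eqref{diag}, so $\rho$ carries $p_*(\Tilde\Pi)$ into $\ker\pi=\iota(G)$ — and where $h_\sigma\in G$ is the unique element with $\rho(p(\lambda_\sigma))=h_\sigma^{-1}\theta$, which exists because $\chi(p(\lambda_\sigma))=\theta$ forces $\rho(p(\lambda_\sigma))$ into the nontrivial $G$-coset of $G^+$. (This normalization of $h_\sigma$ is the one for which the relations below come out exactly as \eqref{1}, \eqref{2}.)

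The core step is a presentation of $\Pi$ relative to $\Tilde\Pi$. Writing $t\coloneqq p(\lambda_\sigma)$, one has $\Pi=\langle\,\Tilde\Pi,\ t\ \mid\ tct^{-1}=\sigma_*(c)\ \text{for}\ c\in\Tilde\Pi,\ \ t^2=\sigma(\lambda_\sigma)\lambda_\sigma\,\rangle$; the last relation lies in $\Tilde\Pi$ (since $\chi(t^2)=1$) and is forced by the observation in \cref{prel1} that $\sigma_*^2$ is conjugation by $\sigma(\lambda_\sigma)\lambda_\sigma$. This can be read off directly from \eqref{present1}, \eqref{preent2} by Reidemeister--Schreier, or deduced from the fact that $\Tilde\Pi$ is free (hence, away from the single degenerate case, centerless), so that index-two extensions $1\to\Tilde\Pi\to\Pi\to\langle\theta\rangle\to1$ are governed exactly by these data. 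With it in hand, a homomorphism $\rho\colon\Pi\to G^+$ with $\pi\circ\rho=\chi$ is exactly the datum of a homomorphism $\tilde\rho\colon\Tilde\Pi\to G$ together with a choice $\rho(t)=h_\sigma^{-1}\theta$ satisfying the two compatibilities $\rho(t)\,\tilde\rho(c)\,\rho(t)^{-1}=\tilde\rho(\sigma_*(c))$ for all $c\in\Tilde\Pi$ and $\rho(t)^2=\tilde\rho(\sigma(\lambda_\sigma)\lambda_\sigma)$. Expanding with $\theta g\theta^{-1}=\theta(g)$ and $\theta^2=1$ in $G^+$, the first becomes precisely \eqref{1} and the second precisely \eqref{2}; and the monodromy condition $\rho(z_i)\in h(O_i)$ becomes, via $p_*(x_{i,1})=z_i$, $x_{i,2}=\sigma_*(x_{i,1})$ and \eqref{1}, the pair of conditions defining $\Hom_{\Tilde{\mathcal{C}}}(\Tilde\Pi,G)$ — here one uses that the relevant semisimple classes are stable under $\theta$. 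Hence $\Psi$ is a bijection onto $\Tilde{\mathcal{U}}_{\mathcal{C}}$, with inverse the extension map of the paragraph before the statement.

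Next I would observe that $\Psi$ and $\Psi^{-1}$ are morphisms of varieties: each amounts to evaluating a representation on a fixed finite generating set of $\Pi$ (respectively of $\Tilde\Pi$ together with $t$), which is polynomial in matrix entries, so $\Psi$ is an isomorphism of affine varieties. It remains to check $G$-equivariance. For $g\in G$, conjugating $\rho$ by $\iota(g)$ in $G^+$ restricts on $\Tilde\Pi$ to $\tilde\rho\mapsto g\tilde\rho g^{-1}$, while on $t$ it gives $\iota(g)(h_\sigma^{-1}\theta)\iota(g)^{-1}=g\,h_\sigma^{-1}\,\theta(g)^{-1}\,\theta=(\theta(g)h_\sigma g^{-1})^{-1}\theta$, so the new second coordinate is $\theta(g)h_\sigma g^{-1}$ — exactly the action \eqref{action}. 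Since a $G$-equivariant isomorphism of schemes induces an isomorphism of the associated quotient stacks, this yields $\mathcal{M}^{\epsilon}_{\mathcal{C}}=[\Hom^{\epsilon}_{\mathcal{C}}(\Pi,G^+)/G]\cong[\Tilde{\mathcal{U}}_{\mathcal{C}}/G]$.

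The step I expect to be the real obstacle is the middle one: pinning down that $\Pi$ carries exactly the relation $t^2=\sigma(\lambda_\sigma)\lambda_\sigma$ relative to $\Tilde\Pi$ and no further ones — so that \eqref{1} and \eqref{2} are not merely necessary but also sufficient for a unique $\rho$ to exist — and then carrying the Cartan twist $\theta(\cdot)$ and the path-composition conventions defining $\sigma_*$ correctly through the expansions. The remaining points (algebraicity of $\Psi$, the equivariance computation, and descent to quotient stacks) are routine.
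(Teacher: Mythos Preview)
Your proposal is correct and is precisely the argument the paper intends: the paper writes only ``The arguments above imply the following Proposition,'' and you have made those arguments explicit via the relative presentation of $\Pi$ over $\Tilde\Pi$ and the resulting $G$-equivariant isomorphism $\Hom^{\epsilon}_{\mathcal{C}}(\Pi,G^+)\cong\Tilde{\mathcal{U}}_{\mathcal{C}}$. One small caveat you already anticipated: with the paper's convention $\sigma_*(c)=\lambda_\sigma^{-1}\sigma(c)\lambda_\sigma$ and $t=p(\lambda_\sigma)$ the conjugation relation reads $t^{-1}ct=\sigma_*(c)$ rather than $tct^{-1}=\sigma_*(c)$, and it is this direction that, with $\rho(t)=h_\sigma^{-1}\theta$, unwinds to \eqref{1} as written.
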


\vspace{10 pt}

\begin{oss}
\label{rqt}If $\tilde{\rho}$ is an irreducible representation and $h$ is such that there is an equality $h \tilde{\rho}h^{-1}=\theta \tilde{\rho}(\sigma_*)$,
then either $h^{-1}\theta(h^{-1})= \tilde{\rho}(\sigma(\lambda_{\sigma})\lambda_{\sigma})$ or  $h^{-1}\theta(h^{-1})=-\tilde{\rho}(\sigma(\lambda_{\sigma})\lambda_{\sigma})$ and only one of the two is true (see \cite[III.5.1.2]{cheng2}), i.e an irreducible $\sigma$-invariant representation is either real or quaternionic.

\end{oss}

\vspace{8 pt}

\begin{oss}
\label{map}
It is natural to consider  the stack $\mathcal{M}_{\tilde{\mathcal{C}}}\coloneqq[\Hom_{\tilde{\mathcal{C}}}(\tilde{\Pi},G)/G]$ and the associated GIT quotient $M_{\tilde{\mathcal{C}}}$. The stacks $\mathcal{M}_{\tilde{\mathcal{C}}}$ and $M_{\tilde{\mathcal{C}}}$ admit an involution, which we denote again by $\sigma$,  induced by the map $$\sigma( \tilde{\rho})\coloneqq \theta\tilde{\rho}(\sigma_*) .$$ 

  We can define a morphism $f:M^{\epsilon}_{\mathcal{C}} \to M_{\tilde{\mathcal{C}}}^{\sigma} $  which maps a couple $(\tilde{\rho},h) $ as in Equation (\ref{3}) to the representation $\tilde{\rho}$. In a slightly more involved way, it would be possible to lift the map $f$ to a morphism of quotient stacks $F:\mathcal{M}^{\epsilon}_{\mathcal{C}} \to \mathcal{M}^{\sigma}_{\tilde{\mathcal{C}}}$. These morphisms are in general not even surjective: we will describe the image of $f$ in certain cases in Proposition \ref{connc}.

\end{oss}

\subsection{Cohomology computation}

In this paragraph we will briefly review the results obtained by Letellier and Rodriguez-Villegas in \cite{NonOr} about the character stacks $\mathcal{M}_{\mathcal{C}}^{\epsilon}$. Let us first recall the definition of the E-series and the mixed Poincar\'e series of an algebraic stack and the combinatorics needed for the formulas for the E-series $E(\mathcal{M}_{\mathcal{C}}^{\epsilon},q)$.

\subsubsection{Mixed Poincar\'e series}
\label{Poinca}
Let $\mathcal{X}$ be an algebraic stack of finite type over an algebraically closed field $k$. For $K=\C$, we will  consider the compactly-supported  cohomology groups  $ H^{*}_c(\mathcal{X})\coloneqq H_c^*(\mathcal{X},\C)$ with coefficients in $\C$. For $K=\overline{\mathbb{F}}_q$, we will denote by $H^*_c(\mathcal{X})$ the compactly supported \'etale cohomology with coefficients in $\overline{\Q}_{\ell}$.

When $K=\C$, each vector space $H^k_c(\mathcal{X})$ is endowed with the weight filtration $W^k_{\bullet}$ (see \cite[Chapter 8]{HodgeIII} for a definition  and \cite[Section 2.2]{NonOr} for the analogous one for stacks $\mathcal{X}$ over $\overline{\mathbb{F}}_q$). We define the mixed-Poincar\'e series $H_c(\mathcal{X},q,t)$ as  \begin{equation}
    \label{poinc}
    H_c(\mathcal{X};q,t)\coloneqq \sum_{k,m}\dim(W^k_m/W^k_{m-1})q^{\frac{m}{2}}t^k.
\end{equation}
The specialization  $H_c(\mathcal{X},1,t)$ of $H_c(\mathcal{X},q,t)$ at $q=1$ is equal to  the Poincar\'e series of the stack $\mathcal{X}$. When $\displaystyle \sum_{k}(-1)^k \dim(W^k_m/W^k_{m-1})$ is finite for each $m$, we define the E-series: \begin{equation}
    \label{epol}
    E(\mathcal{X},q) \coloneqq H_c(\mathcal{X};q,-1)=\sum_{m,k}\dim(W^k_m/W^k_{m-1})(-1)^kq^{\frac{m}{2}}.
\end{equation}

For a quotient stack $\mathcal{X}=[X/G]$ where $G$ is a connected linear algebraic group and $X$ an affine variety, the E-series $E(\mathcal{X},q)$ is well defined and $E(\mathcal{X},q)=E(X,q)E(BG,q)$ where $BG$ is the classifying stack of $G$: for a proof see \cite[Theorem 2.5]{NonOr}.

\subsubsection{Combinatorics}
\label{combinatorics}

We fix  integers $m,k \geq 0$ and we denote by $\mathcal{P}$ the set of partitions. Let $\mathbf{x_1}=\{x_{1,1},x_{1,2} \dots \},\dots$ $ \dots, \mathbf{x_k}=\{x_{k,1},\dots\}$ be $k$ sets of infinitely many variables and let us denote by $\Lambda_k=\Lambda(\mathbf{x_1},\dots ,\mathbf{x_k})$ the ring of functions separately symmetric in each set of variables. On $\Lambda_k$ there is a natural bilinear form obtained by  extending by linearity $$\left < f_1(\mathbf{x_1})\cdots f_k(\mathbf{x_k}),g_1(\mathbf{x_1})\cdots g_k(\mathbf{x_k})\right>=\prod_{i=1}^k \left<f_i,g_i\right> $$ where $\left<,\right>$ is the  bilinear form making the Schur functions $s_{\mu}$ an orthonormal basis. For a multipartition $\bm \mu=(\mu_1,\dots ,\mu_k) \in \mathcal{P}^k$ we denote by $h_{\bm{\mu}}=h_{\mu_1}(\mathbf{x_1})\cdots h_{\mu_k}(\mathbf{x_k})$ the associated complete symmetric function and similarly $m_{\bm{\mu}}=m_{\mu_1}(\mathbf{x_1})\cdots m_{\mu_k}(\mathbf{x_k})$.

We consider the hook functions 
\begin{equation}
 \label{defH}
 \mathcal{H}_{m,\lambda}(z,w)=\prod_{s \in \lambda} \dfrac{(z^{2a(s)+1}-w^{2l(s)+1})^m}{(z^{2a(s)+2}-w^{2l(s)})(z^{2a(s)}-w^{2l(s)+2})}
\end{equation}

and the associated series \begin{equation}
    \label{omega}
    \Omega_m(z,w)=\sum_{\lambda \in \mathcal{P}}\mathcal{H}_{m,\lambda}(z,w) \prod_{i=1}^k H_{\lambda}(\mathbf{x_i},z^2,w^2)
\end{equation}

where $H_{\lambda}(\mathbf{x_i},q,t)$ are the (modified) Macdonald symmetric polynomials (for a definition see \cite[I.11]{garsia-haiman}). We define the functions $\mathbb{H}_{\bm \mu,m}(z,w)$ by the following formula:
\begin{equation}
    \label{formula}
    \mathbb{H}_{\bm \mu,m}(z,w)\coloneqq(z^2-1)(1-w^2)\left<\Plelog(\Omega_m(z,w)),h_{\bm \mu}\right>
\end{equation}
where $\Plelog$ is the plethystic logarithm (for a definition see for example \cite[Section 2.3.3]{HLRV}).

\subsubsection{E-polynomials and conjectures}
\label{EP}
Let us now explain one of the main results of \cite{NonOr} about the stacks $\mathcal{M}_{\mathcal{C}}^{\epsilon}$. Let $\mathcal{C}=(O_1,\dots, O_k)$ be a $k$-tuple of semisimple orbits of $G$.

\begin{definizione}
\label{genericdefi}
The $k$-tuple  $\mathcal{C}$ is said to be \textit{generic} if the following property holds. Given a subspace $V$ of  $K^n$ which is stabilized by some $X_i \in O_i$, for each $i=1,\dots,k$,  such that $\displaystyle\prod_{i=1}^k \det(X_i|_V)=1  $ then either $V=\{0\}$ or $V=K^n$.
\end{definizione}
In \cite[Theorem 4.6]{NonOr} the authors showed that for  a generic $\mathcal{C}$, the following equality holds: \begin{equation}
    E(\mathcal{M}^{\epsilon}_{\mathcal{C}},q)=\dfrac{q^{\frac{d_{\mu}}{2}}}{q-1}\mathbb{H}_{\bm \mu,r}\left(\sqrt{q},\dfrac{1}{\sqrt{q}} \right)
\end{equation}
where $\bm \mu=(\mu_1,\dots ,\mu_k)$ is the  multipartition given by the multiplicities of the  eigenvalues of $O_1,\dots ,O_k$ respectively and $$d_{\bm \mu}=n^2(r-2+k)+2-\sum_{i,j}(\mu_i^j)^2 .$$ This result is surprinsigly similar to the analogous one obtained in \cite[Theorem 1.2.3]{HLRV} about character stacks for Riemann surfaces. Fix a Riemann surface $\overline{\Sigma}$ of genus $g$  and a set of $k$-points $\overline{S}=\{y_1,\dots ,y_k\} \subseteq \overline{\Sigma}$. The associated character stack $\mathcal{M}_{\mathcal{C}}$ is the quotient stack $$\mathcal{M}_{\mathcal{C}}\coloneqq\bigl[\{A_1,B_1 ,\dots ,A_g,B_g \in G,\ X_1 \in O_1 ,\dots X_k \in O_k \ | \ [A_1,B_1]\cdots [A_g,B_g]X_1\cdots X_k=1\}/G\bigr] .$$ In \cite[Theorem 1.2.3]{HLRV}  the authors showed that the following equality holds: \begin{equation}
\label{epolor}E(\mathcal{M}_{\mathcal{C}},q)=\dfrac{q^{\frac{d_{\mu}}{2}}}{q-1}\mathbb{H}_{\bm \mu,2g}\left(\sqrt{q},\dfrac{1}{\sqrt{q}} \right) \end{equation} where  $d_{\bm \mu}=n^2(2g-2+k)+2-\sum_{i,j}(\mu_{i}^j)^2$. Notice that for $r=2h$ the E-polynomial of $\mathcal{M}_{\mathcal{C}}^{\epsilon}$ agrees thus with the one of $\mathcal{M}_{\mathcal{C}}$ for a Riemann surface $\overline{\Sigma}$ of genus $h$.

In the same paper \cite[Conjecture 1.2.1]{HLRV}, the authors were also able to give a conjectural formula for the whole mixed Poincar\'e series of the stacks $\mathcal{M}_{\mathcal{C}}$,  naturally deforming Equation (\ref{epolor}).
The conjectural identity for the mixed Poincat\'e series of $\mathcal{M}_{\mathcal{C}}$ is \begin{equation}
\label{mhp}
    H_c(\mathcal{M}_{\mathcal{C}},q,t)=\dfrac{(qt^2 )^{\frac{d_{\mu}}{2}}}{qt^2-1}\mathbb{H}_{\bm \mu,2g}\left(t\sqrt{q},-\dfrac{1}{\sqrt{q}}, \right).
\end{equation}

The conjectural identity (\ref{mhp}) is generally believed to be true. The Poincar\'e series of the stacks $\mathcal{M}_{\mathcal{C}}$ were computed by Mellit in  \cite[Theorem 7.12]{Mellit2} and his result agrees with the specialization of Formula (\ref{mhp}) at $q=1$. In \cite[Theorem 4.6]{NonOr} it is proved that a formula  analogous to Formula (\ref{mhp}) holds in the non-orientable setting for $r=k=1$ i.e that the following equality holds \begin{equation}
\label{falseconj1}
    H_c(\mathcal{M}^{\epsilon}_{\mathcal{C}},q,t)=\dfrac{(qt^2)^{\frac{d_{\mu}}{2}}}{qt^2-1}\mathbb{H}_{\bm \mu,1}\left(t\sqrt{q},-\dfrac{1}{\sqrt{q}} \right).
\end{equation}
It would therefore have been natural to expect that such a formula holds for all $r,k$ i.e that \begin{equation}
    \label{falseconj}
    H_c(\mathcal{M}_{\mathcal{C}}^{\epsilon},q,t)=\dfrac{(qt^2)^{\frac{d_{\mu}}{2}}}{qt^2-1}\mathbb{H}_{\bm \mu,r}\left(t\sqrt{q},-\dfrac{1}{\sqrt{q}} \right).
\end{equation}
 for a generic $\mathcal{C}$.
The main result of this paper is a counterexample to Formula (\ref{falseconj}), obtained by an explicit description of these spaces in the case $r=2$.

\section{Main results}

\subsection{Character stacks for $k=1$ and generic orbit}
\label{geomd}

In this section we assume that $K=\C$. We fix $r \geq 1$ and a Riemann surface $\Tilde{\Sigma}$ of genus $g \coloneqq r-1$ with an antiholomorphic involution $\sigma: \Tilde{\Sigma} \to \Tilde{\Sigma}$. Consider a point $z_1 \in \Sigma=\Tilde{\Sigma}/\sigma$ and the subset $S\coloneqq\{z_1\} \subseteq \Sigma$ (i.e $k=1$). Let $d,n \in \N$ such that $d$ is even and $(d,n)=1$. Let $\mathcal{C}$ be the generic semisimple orbit of $\Gl_n(\C)$ given by $\mathcal{C}=\{e^{\pi i \frac{d}{n}}I_n \}.$ 

We denote the associated character stacks in this case by $\mathcal{M}^{\epsilon}_{n,d}\coloneqq \mathcal{M}_{\mathcal{C}}^{\epsilon}$ and $\mathcal{M}_{n,d}\coloneqq \mathcal{M}_{\Tilde{\mathcal{C}}}$ respectively and similarly  the associated GIT quotients by $M_{n,d}^{\epsilon}$ and $ M_{n,d}$ respectively.

As $\mathcal{C}$ is a central orbit, the character stack $\mathcal{M}_{n,d}$ is the twisted character stack $$\bigl[\{A_1,B_1 ,\dots ,A_g,B_g \in \Gl_n \ | \ [A_1,B_1]\cdots [A_g,B_g]=e^{\frac{2 \pi i d}{n}} \}/\Gl_n\bigr] .$$ considered for example in \cite{HRV}. As $d$ and $n$ are coprime, the representations $\tilde{\rho} \in \mathcal{M}_{n,d}$ are irreducible (as shown in \cite[Lemma 2.2.6]{HRV}). In this case, given an element $\rho \in \Hom^{\epsilon}_{\mathcal{C}}(\Pi,G^+)$ corresponding to a couple $(\tilde{\rho},h_{\sigma})$ with $\tilde{\rho} \in \Hom_{\tilde{\mathcal{C}}}(\tilde{\Pi},G)$ we have $\Stab_G(\rho)=\pm 1$ (see \cite[III.5.1.3]{cheng2}). The stack $\mathcal{M}^{\epsilon}_{n,d}$ is thus a $\mu_2$-gerbe over the affine variety $ M^{\epsilon}_{n,d}$.

\begin{oss}
\label{cohomology}

The canonical morphism $q:\mathcal{M}^{\epsilon}_{n,d} \rightarrow M^{\epsilon}_{n,d}$, being a $\mu_2$-gerbe, is proper. The proper base change for Artin stacks implies that for every $x \in M^{\epsilon}_{n,d}$ and for every $i \in \Z$ we have $$(R^{i}q_*\C)_x=H^{i}(B(\mu_2),\C) .$$ As the rational higher cohomology of $B(\mu_2)$ vanishes, $R^{i}q_*\C=0$ if $ i \neq 0$ and $q_*\C=\C .$ The Leray spectral sequence for cohomology with compact support implies that $H^p_c(\mathcal{M}^{\epsilon}_{n,d}) \cong H^p_c(M^{\epsilon}_{n,d}) .$ The cohomology of the quotient stack is isomorphic to that of the GIT quotient: in particular, the (compactly-supported) cohomology of  $\mathcal{M}^{\epsilon}_{n,d}$ is $0$ in negative degrees.
\end{oss}

\vspace{10 pt}

\vspace{10 pt}

 The main result of this paragraph is the following proposition:

\begin{prop}
\label{connc}
\noindent

(i) If $r$ is odd there are no quaternionic  representations inside  $M_{n,d}^{\sigma}$ . If $r$ is even, $M_{n,d}^{\sigma}$ admits a  decomposition into open-closed subvarieties  $$M_{n,d}^{\sigma}=M_{n,d}^{\sigma,+} \bigsqcup M_{n,d}^{\sigma,-} $$ where $M_{n,d}^{\sigma,+},M_{n,d}^{\sigma,-}$ are given by real/quaternionic representations respectively and there is an isomorphism $M_{n,d}^{\sigma,+} \cong M_{n,d}^{\sigma,-}$. 

\noindent
(ii)  
The map $f:M^{\epsilon}_{n,d} \to M_{n,d}^{\sigma,+}$   introduced in Remark \ref{map} is an isomorphism.

\end{prop}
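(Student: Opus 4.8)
The plan is to analyze carefully the defining equations (\ref{1}) and (\ref{2}) for the variety $\Tilde{\mathcal{U}}_{\mathcal{C}}$ in the special case $k=1$, central orbit $\mathcal{C}=\{e^{\pi i d/n}I_n\}$, using the irreducibility of every $\tilde{\rho}\in\mathcal{M}_{n,d}$. Since $d,n$ are coprime the representation $\tilde\rho$ restricted to $\tilde\Pi$ is irreducible, so by Schur's lemma the element $h_\sigma$ solving $h_\sigma\tilde\rho h_\sigma^{-1}=\theta\tilde\rho(\sigma_*)$ is unique up to a scalar. Applying Remark \ref{rqt}, each $\sigma$-invariant $\tilde\rho$ is then either real or quaternionic but not both, which immediately produces the disjoint decomposition $M_{n,d}^\sigma=M_{n,d}^{\sigma,+}\sqcup M_{n,d}^{\sigma,-}$ as sets; one then checks this is an open–closed decomposition of varieties (the real/quaternionic condition is the vanishing of the regular function $x\mapsto h_\sigma^{-1}\theta(h_\sigma^{-1})\tilde\rho(\sigma(\lambda_\sigma)\lambda_\sigma)^{-1}\mp 1$, and scaling $h_\sigma\mapsto \zeta h_\sigma$ changes the sign of $h_\sigma^{-1}\theta(h_\sigma^{-1})$ by $\zeta^{-2}$, so both signs stay attainable or not in a locally constant way). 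The parity statement — that for $r$ odd no quaternionic locus appears — should come from computing $\tilde\rho(\sigma(\lambda_\sigma)\lambda_\sigma)$ explicitly: recall $\sigma_*^2$ is conjugation by $\sigma(\lambda_\sigma)\lambda_\sigma$, and iterating (\ref{1}) gives $(\theta h_\sigma)^2\in G$ acting as $\tilde\rho(\sigma(\lambda_\sigma)\lambda_\sigma)^{-1}$ up to scalar; tracking how the central value $e^{\pi i d/n}$ and the genus $g=r-1$ enter (via the presentation (\ref{present1}), where $d_1^2\cdots d_r^2$ contributes a sign depending on the parity of $r$ when one passes to the orientation double cover) should force, when $r$ is odd, that only the ``real'' sign is compatible with $d$ even.

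For the isomorphism $M_{n,d}^{\sigma,+}\cong M_{n,d}^{\sigma,-}$ in the even case, I would exhibit an explicit algebraic bijection: twist a real solution $(\tilde\rho,h_\sigma)$ by a fixed element $g_0\in G$ with $\theta(g_0)g_0^{-1}$ equal to a scalar matrix $\zeta I_n$ with $\zeta\notin(\C^*)^2$ chosen so that the sign in (\ref{2}) flips — concretely, conjugating $\tilde\rho$ and replacing $h_\sigma$ by $\theta(g_0)h_\sigma g_0^{-1}$ sends real to quaternionic and is its own inverse up to the evident bookkeeping, descending to the GIT quotients. One must check such $g_0$ exists precisely when $r$ is even (this is the same parity computation as above, now read as: the obstruction to flipping lives in a cohomology group that is nontrivial exactly in the even case), which is why there is no analogous statement for $r$ odd.

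For part (ii), the map $f$ sends $(\tilde\rho,h_\sigma)\mapsto\tilde\rho$; its image lands in the real locus by construction, and I would build an inverse by showing that for a real $\sigma$-invariant irreducible $\tilde\rho$ the element $h_\sigma$ satisfying both (\ref{1}) and (\ref{2}) is \emph{unique up to sign} — uniqueness up to scalar is Schur, and the normalization (\ref{2}) pins the scalar down to $\pm1$ (if $h_\sigma$ and $\zeta h_\sigma$ both satisfy (\ref{2}) then $\zeta^{-2}=1$). Hence the $G$-orbit of $(\tilde\rho,h_\sigma)$ is determined by the $G$-orbit of $\tilde\rho$, so $f$ is bijective; since both sides are the GIT quotients of affine varieties by $G$ and $f$ is induced by a $G$-equivariant morphism which is a bijection between geometric points with the same (trivial, after quotienting the $\pm1$) stabilizers, $f$ is an isomorphism — here one invokes that $\Tilde{\mathcal{U}}_{\mathcal{C}}\to\Hom_{\tilde{\mathcal{C}}}(\tilde\Pi,G)^\sigma$ is finite étale of degree dividing $2$ onto the real locus and that the residual $\mu_2$ is exactly the gerbe band already quotiented out.

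The main obstacle I anticipate is the bookkeeping in the parity argument: making precise how the sign in the normalization (\ref{2}) interacts with the central monodromy $e^{\pi i d/n}$ and the relation $d_1^2\cdots d_r^2 z_1=1$ when passing between $\Pi$, $\tilde\Pi$, and the ambient $G^+$. In particular one has to be careful that $\sigma_*$ is only an involution up to the inner automorphism by $\sigma(\lambda_\sigma)\lambda_\sigma$, so all the ``$h_\sigma^2$'' computations must be done modulo scalars and then the scalar recovered from (\ref{2}); getting the dependence on the parity of $r$ right (rather than, say, the parity of $g$ or of $d$) is the delicate point, and Example \ref{ellip} together with \cite[III.5.1.2, III.5.1.3]{cheng2} should be the guide.
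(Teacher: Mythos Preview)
Your plan has two genuine gaps.

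\textbf{The flip $M_{n,d}^{\sigma,+}\cong M_{n,d}^{\sigma,-}$.} The proposed twist by $g_0$ with $\theta(g_0)g_0^{-1}=\zeta I_n$, $\zeta\notin(\C^*)^2$, cannot work: over $\C$ every nonzero scalar is a square, so no such $\zeta$ exists. More fundamentally, replacing $(\tilde\rho,h_\sigma)$ by $(g_0\tilde\rho g_0^{-1},\theta(g_0)h_\sigma g_0^{-1})$ is exactly the $G$-action (\ref{action}), and a direct computation gives
\[
(h'_\sigma)^{-1}\theta((h'_\sigma)^{-1})=g_0\,h_\sigma^{-1}\theta(h_\sigma^{-1})\,g_0^{-1}=g_0\,\tilde\rho(\sigma(\lambda_\sigma)\lambda_\sigma)\,g_0^{-1},
\]
so the sign in (\ref{2}) is preserved, not flipped. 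The paper instead produces the isomorphism by \emph{tensoring} with a rank-$1$ quaternionic representation $\tau\in M_{1,0}^{\sigma,-}$, whose existence for $r$ even is a topological input \cite[Theorem 2.4]{FS2}; this is not an inner operation on $G$ and is precisely why it changes the type.

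\textbf{The parity argument.} You correctly flag this as the delicate point, but the sketch via sign-tracking in the presentation $d_1^2\cdots d_r^2 z_1=1$ is not carried out and I do not see how to make it work directly. The paper avoids this entirely: since $d$ is even and $(d,n)=1$, the rank $n$ is odd, so the determinant of a quaternionic Higgs bundle is again quaternionic; one then invokes the non-abelian Hodge correspondence for $\sigma$-invariant objects (\cref{Dol}) and the topological classification \cite[Theorem 2.4]{FS}, which rules out quaternionic line bundles when $r$ is odd. This is an external input, not internal bookkeeping.

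Your treatment of (ii) via Schur and the $\pm1$ ambiguity is essentially right at the level of points, but note that the ``regular function'' you propose for the open--closed decomposition is not defined on $M_{n,d}^\sigma$ itself, since $h_\sigma$ only exists after a choice. The paper handles both issues at once by introducing an auxiliary variety $Z=\{(\tilde\rho_1,\tilde\rho_2,h):h\tilde\rho_1 h^{-1}=\tilde\rho_2\}$ and proving that $Z\to Y=\{(\tilde\rho_1,\tilde\rho_2):\tilde\rho_1\cong\tilde\rho_2\}$ is an \'etale-locally trivial $\mathbb{G}_m$-bundle (Lemma~\ref{princ}); restricting over $Y^\sigma$ makes the sign map $(\tilde\rho,h)\mapsto \theta(h)h\,\tilde\rho(\sigma(\lambda_\sigma)\lambda_\sigma)\in\{\pm I_n\}$ a genuine morphism upstairs, and openness of $\psi$ pushes the decomposition down. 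The same $\mathbb{G}_m$-bundle, after dividing by the center, yields the isomorphism $f$.
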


\vspace{10 pt}

 Before proving  Proposition \ref{connc}, we notice  that the quaternionic and the real representations form disjoint subsets by Remark \ref{rqt}. To see that there are no quaternionic representations for $r$ odd, we will use the  equivalence between quaternionic representations and quaternionic Higgs bundles. As this correspondence is crucial for the study of the varieties $M_{n,d}^{\epsilon}$, let us briefly review it here. For more details, see for example \cite{GP},\cite{FS2},\cite{ba1},\cite{ba2},\cite{BGP}

\subsubsection{Real and quaternionic Higgs bundles}
\label{Dol}
A Higgs bundle over $\Tilde{\Sigma}$ is a couple $(\mathcal{E},\Phi) $ where $\mathcal{E}$ is a vector bundle over $\Tilde{\Sigma}$ and $\Phi$ a morphism $\Phi:\mathcal{E} \to \mathcal{E} \otimes \Omega^1_{\tilde{\Sigma}}$. The moduli space of (stable) Higgs bundle over $\Tilde{\Sigma}$ of rank $n$ and degree $d$ is denoted by $M_{Dol,n,d}$ (for a definition of stability see for example \cite[Section 4.1]{ba1} or \cite[Definition 2.3]{GP}). It is a fundamental result (see for example \cite{Simp}) that there is a homeomoprhism (called non abelian Hodge correspondence)
\begin{equation}
\label{homeo}
M_{Dol,n,d} \cong M_{n,d}. \end{equation} 

We consider the involution on $M_{Dol,n,d}$ , which we denote   again by 
 $\sigma$, given by $$\sigma((\mathcal{E},\Phi))=(\sigma^*(\overline{\mathcal{E}}),-\sigma^*(\overline{\Phi})) $$ and we say that a Higgs bundle $(\mathcal{E},\Phi)$ is $\sigma$-invariant if there exists an isomorphism $\alpha:(\mathcal{E},\Phi) \to \sigma((\mathcal{E},\Phi))$. Real Higgs bundles are couples $((\mathcal{E},\Phi),\alpha)$ such that $$\sigma^*(\overline{\alpha})\alpha=I_{\mathcal{E}} .$$  In a similar way,  quaternionic Higgs bundles are defined by asking for the equality $$\sigma^*(\overline{\alpha})\alpha=-I_{\mathcal{E}} .$$

In \cite[Proposition 5.6]{GP},\cite[Theorem 4.8]{BGP} it is shown that the homemorphism  (\ref{homeo}) restricts to a homeomorphism $M_{n,d}^{\sigma} \cong M_{Dol,n,d}^{\sigma}$. In \textit{loc.cit} it is  shown moreover that this bijection  sends real/quaternionic representations into real/quaternionic Higgs bundles respectively. We will denote the subsets of $M_{Dol,n,d}$ given by real/quaternionic Higgs bundles by $M_{Dol,n,d}^{\sigma,+}/M_{Dol,n,d}^{\sigma,-} $ respectively. 

Notice that, as $\sigma:M_{Dol,n,d}\to M_{Dol,n,d}$ is antiholomorphic, the fixed points locus $M_{Dol,n,d}^{\sigma}$ is not a complex algebraic variety anymore but is identified with the set of $\mathbb{R}$-points of $M_{Dol,n,d}$ with respect to the real structure induced by $\sigma$.

\vspace{12 pt}
 For odd $r$, if a quaternionic couple $(\tilde{\rho},h) $ existed (i.e $M_{n,d}^{\sigma,-} \neq \emptyset$) 
 there would exist a stable quaternionic Higgs bundle $(\mathcal{E},\Phi)$ on $\Tilde{\Sigma}$. Its determinant $\det( \mathcal{E}) $ would be a quaternionic line bundle of degree $d$ over $\Tilde{\Sigma}$: the quaternionic condition is preserved under taking the determinant as $n$ is odd. The existence of a quaternionic line bundle for odd $r$ is ruled out by the topological criterion of \cite[Theorem 2.4]{FS}.  
\vspace{10 pt}

To prove Proposition \ref{connc}, we will need the following preliminary Lemma.

\begin{lemma}
\label{princ}
Put $\Hom_{n,d}(\Tilde{\Pi},G) \coloneqq \Hom_{\tilde{\mathcal{C}}}(\Tilde{\Pi},G)$ and let us consider the varieties $Y,Z$ defined by $$Y \coloneqq \Hom_{n,d}(\Tilde{\Pi},G) \times_{M_{n,d}}\Hom_{n,d}(\Tilde{\Pi},G)=\{(\tilde{\rho}_1,\tilde{\rho}_2) \ | \ \tilde{\rho}_1 \cong \tilde{\rho}_2 \} $$ and $$Z \coloneqq \{(\tilde{\rho}_1,\tilde{\rho}_2,h) \ | \ (\tilde{\rho}_1,\tilde{\rho}_2) \in Y \ , \ h \in \Gl(n) \ | \ h\tilde{\rho}_1h^{-1}=\tilde{\rho}_2 \} .$$

The projection map $\psi:Z \to Y$ is a principal $\mathbb{G}_m$-bundle for the \'etale topology.
 \end{lemma}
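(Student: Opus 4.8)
The plan is to exhibit $\psi : Z \to Y$ explicitly as a quotient of a group scheme action and then check local triviality via étale-local existence of sections. First I would observe that since $d$ and $n$ are coprime, every representation $\tilde\rho_1 \in \Hom_{n,d}(\tilde\Pi,G)$ is irreducible (by \cite[Lemma 2.2.6]{HRV}), so for any $(\tilde\rho_1,\tilde\rho_2) \in Y$ the set of $h \in \Gl(n)$ with $h\tilde\rho_1 h^{-1} = \tilde\rho_2$ is non-empty and, by Schur's lemma, is a single torsor under the scalars $\mathbb{G}_m = Z(\Gl_n) \cong \Aut(\tilde\rho_1)$. Thus set-theoretically the fibres of $\psi$ are precisely $\mathbb{G}_m$-torsors; the content is to upgrade this to a principal bundle in the étale topology. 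The $\mathbb{G}_m$-action on $Z$ is the obvious one, $t \cdot (\tilde\rho_1,\tilde\rho_2,h) = (\tilde\rho_1,\tilde\rho_2, th)$, which is free (again by irreducibility: $th\tilde\rho_1 h^{-1} = h \tilde\rho_1 h^{-1}$ and $th = h$ forces $t=1$), and $\psi$ is $\mathbb{G}_m$-invariant.

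Next I would verify that $Z$ is closed in $Y \times \Gl_n$ and that $\psi$ is surjective, flat and of finite type: surjectivity is the existence statement above, and flatness follows because $\psi$ is a $\mathbb{G}_m$-equivariant morphism whose fibres all have the same dimension $1$, with $Z$ and $Y$ smooth — or, more robustly, one checks directly that $Z \to Y$ is smooth of relative dimension $1$ by computing that the defining equation $h\tilde\rho_1(\gamma) = \tilde\rho_2(\gamma)h$ cuts out a smooth subscheme (the differential is surjective onto the tangent space of $Y$ because any first-order deformation of a pair of isomorphic irreducible representations can be realized by deforming the intertwiner). Then to get local triviality I would use the standard fact that a free action of a smooth affine group scheme with a flat, affine, surjective quotient map is a torsor as soon as the quotient map admits sections étale-locally; for $\mathbb{G}_m$ this is automatic since $\mathbb{G}_m$-torsors are Zariski-locally trivial by Hilbert 90, so it suffices to produce a section over a Zariski-open cover of $Y$. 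Concretely, for a fixed $\gamma_0 \in \tilde\Pi$ and an open set $U \subseteq Y$ on which some fixed matrix coefficient of the intertwiner can be normalized (e.g. where a chosen $(i,j)$-entry of the "canonical" intertwiner built from $\tilde\rho_1,\tilde\rho_2$ is non-zero), one writes down an algebraic section $U \to Z$; these opens cover $Y$ because the intertwiner is never zero.

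The main obstacle I expect is not the set-theoretic fibre description but the two schematic points: (a) checking $\psi$ is flat/smooth rather than merely bijective on fibres — this requires a tangent-space computation using irreducibility, and one must be careful that $Y$ itself (a fibre product over the possibly non-reduced GIT quotient $M_{n,d}$) is well-behaved, so it is cleaner to define $Y$ directly as $\{(\tilde\rho_1,\tilde\rho_2) : \tilde\rho_1 \cong \tilde\rho_2\}$ with its reduced-or-natural scheme structure as the image of $Z$, sidestepping subtleties of $\times_{M_{n,d}}$; and (b) producing the étale-local (here even Zariski-local) sections in an honestly algebraic way. A clean way to handle (b) uniformly: fix generators $\gamma_1,\dots,\gamma_m$ of $\tilde\Pi$; for each pair $(a,b)$ of indices, the locus $U_{a,b} \subseteq Y$ where the $(a,b)$-entry of the natural intertwiner is non-zero is open, these cover $Y$ by Schur, and on $U_{a,b}$ normalizing that entry to $1$ gives a section. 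With flatness, freeness, finite presentation and the existence of these sections in hand, the fppf-descent criterion for torsors (\cite{SGA3} or the quotient-by-free-action results for affine groups) yields that $\psi : Z \to Y$ is a principal $\mathbb{G}_m$-bundle for the étale (indeed Zariski) topology, which is the claim.
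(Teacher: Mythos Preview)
Your approach is essentially correct, though phrased somewhat loosely in places, and it is genuinely different from the paper's. The paper does not try to construct local sections directly. Instead it uses that the quotient map $\tilde q:\Hom_{n,d}(\tilde\Pi,G)\to M_{n,d}$ is already a principal $\PGl_n$-bundle in the \'etale topology. Trivializing $\tilde q$ over an \'etale cover $\{U_i\}$ of $M_{n,d}$ gives $Y_{U_i}\cong U_i\times\PGl_n\times\PGl_n$ and identifies $Z_{U_i}$ with $\{(u,g,h,s)\in U_i\times\PGl_n\times\PGl_n\times\Gl_n : gh^{-1}=[s]\}$, so that $\psi$ becomes, over each $U_i$, the pullback of the quotient map $\Gl_n\to\PGl_n$. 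Since the latter is a $\mathbb G_m$-bundle, so is $\psi$. This completely sidesteps any smoothness/flatness verification and any explicit construction of sections.

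Your route---in effect recognizing $Z$ as the complement of the zero section in the line bundle $L\to Y$ whose fibre over $(\tilde\rho_1,\tilde\rho_2)$ is $\Hom_{\tilde\Pi}(\tilde\rho_1,\tilde\rho_2)$---also works, but two points deserve tightening. First, the phrase ``the $(a,b)$-entry of the natural intertwiner'' is imprecise, since there is no canonical intertwiner; what you really mean is the open locus where the line $\Hom_{\tilde\Pi}(\tilde\rho_1,\tilde\rho_2)\subset M_n$ is not contained in the hyperplane $\{h_{a,b}=0\}$, on which normalizing that entry to $1$ yields a section. Second, your worry that $M_{n,d}$ (and hence $Y$) might be non-reduced is unfounded here: since $(n,d)=1$ every representation is irreducible, $M_{n,d}$ is smooth, and $\tilde q$ is a smooth $\PGl_n$-torsor, so the fibre product $Y$ is smooth as well. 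With that in hand the constant-rank argument shows $L$ is a line sub-bundle of the trivial $M_n$-bundle, and no separate flatness or tangent-space computation is needed. The paper's argument is shorter and reduces everything to the known bundle $\Gl_n\to\PGl_n$; yours is more self-contained and makes the line-bundle nature of the intertwiners explicit.
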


\begin{proof}
The variety $Z$ is endowed with the $\mathbb{G}_m$ action $t \cdot (\rho_1,\rho_2,h)=(\rho_1,\rho_2,th) .$ This action is free and transitive on the fibers of $\psi$, as all the representations inside $\Hom_{n,d}(\Tilde{\Pi},G)$ are irreducible. Moreover $\psi(t \cdot z)=\psi(z)$ for all $z \in Z$. We are thus reduced to show that $\psi$ is locally trivial for the \'etale topology. 

As the map $\tilde{q}:\Hom_{n,d}(\tilde{\Pi},G) \to M_{n,d}$ is a principal $\PGl_n$-bundle for the \'etale topology, there exists an \'etale open covering  $\{U_i\}_{i \in I}$ of $ M_{n,d}$ such that $\tilde{q}^{-1}(U_i) \cong U_i \times \PGl_n$ for each $i \in I$. Put $Y_{U_i}\coloneqq Y \times_{M_{n,d}} U_i$ and similarly $Z_{U_i} \coloneqq Z \times_{M_{n,d}} U_i$. It is enough to show that the pullback map $\psi:Z_{U_i} \to Y_{U_i}$ is locally trivial in the \'etale topology for each $i \in I$. 

Fix then  $i \in I$ and put $U_i=U$. Notice that the variety $Y_{U}$ admits the following isomorphism:  $$Y_U=\tilde{q}^{-1}(U)\times_U \tilde{q}^{-1}(U) \cong (U \times \PGl_n) \times_U (U \times \PGl_n) \cong U \times \PGl_n \times \PGl_n .$$

 In a similar way, the variety $Z_U$ is isomorphic to    $$Z_U=\psi^{-1}(Y_U)=\{(u,g,h,s) \in U \times \PGl_n \times \PGl_n \times \Gl_n \ | \ gh^{-1}=[s] \}$$ so that $\psi$ corresponds to the morphism $\psi(u,g,h,s)=(u,g,h)$. We can view $Y_U$ as a subset of $ U \times \PGl_n \times \PGl_n \times \PGl_n$ as $$Y_U=\{(u,g,h,s) \in  U \times \PGl_n \times \PGl_n \times \PGl_n \ | \ gh^{-1}=s \} .$$ 
 
 Via these identifications, the map $\psi$ corresponds to the restriction of the morphism\\ $ U \times \PGl_n \times \PGl_n \times \Gl_n \to  U \times \PGl_n \times \PGl_n \times \PGl_n $ given by the identity on the first three factors and the  quotient map $\Gl_n \to \PGl_n$ on the last one. This is a principal $\mathbb{G}_m$-bundle because $\Gl_n \to \PGl_n$ is so.

\end{proof}

\vspace{10 pt}

\vspace{10 pt}

We now prove  Proposition \ref{connc}. We keep the notations of Lemma \ref{princ}.

\begin{proof}[Proof  of Proposition \ref{connc}]
  $ $\newline
 Put $\Hom_{n,d}(\tilde{\Pi},G)^{\sigma}=\tilde{q}^{-1}(M_{n,d}^{\sigma})$ and  $\Hom_{n,d}(\tilde{\Pi},G)^{\sigma,+}=\tilde{q}^{-1}(M_{n,d}^{\sigma,+})$  and similarly for quaternionic representations $\Hom_{n,d}(\tilde{\Pi},G)^{\sigma,-}=\tilde{q}^{-1}(M_{n,d}^{\sigma,-})$. The variety $\Hom_{n,d}(\tilde{\Pi},G)^{\sigma} $ is isomorphic to the closed subvariety $Y^{\sigma}$ of $Y$ given by: $$Y^{\sigma}=\{(\tilde{\rho}_1,\tilde{\rho}_2) \in Y \ | \ \tilde{\rho}_2=\theta \tilde{\rho}_1 \sigma_* \} $$ via the map $p_1|_{Y^{\sigma}}:Y^{\sigma}\to \Hom_{n,d}(\tilde{\Pi},G)^{\sigma} $, where $p_1$ is the projection onto the first factor of $Y$. Put $Y^{\sigma,+}=p_1^{-1}(\Hom_{n,d}(\tilde{\Pi},G)^{\sigma,+})$ and similarly $Y^{\sigma,-}$. From Remark \ref{rqt} there is a well-defined  morphism  $p_3:\psi^{-1}(Y^{\sigma}) \to \{I_n,-I_n\}$ $$(\tilde{\rho}_1,\tilde{\rho}_2,h) \to \theta(h)h\tilde{\rho}(\sigma(\lambda_{\sigma})\lambda_{\sigma}) .$$
 
Notice that  $Y^{\sigma,+}=\psi(p_3^{-1}(I_n))$ and $Y^{\sigma,-}=\psi(p_3^{-1}(-I_n))$. As $\psi$ is open, we deduce that $\Hom_{n,d}(\tilde{\Pi},G)^{\sigma,+},$
 $\Hom_{n,d}(\tilde{\Pi},G)^{\sigma,-}$ are disjoint and open and so closed too inside $\Hom_{n,d}(\tilde{\Pi},G)^{\sigma}$. The same is true then for $M_{n,d}^{\sigma,+},M_{n,d}^{\sigma,-}$. The projection $(\tilde{\rho}_1,\tilde{\rho}_2,h) \to (\tilde{\rho}_1,h)$ induces an isomorphism $\psi^{-1}(Y^{\sigma,+})=\Hom_{n,d}(\Pi,G^+)^{\epsilon}$. By Proposition \ref{princ}, the morphism $$\Hom_{n,d}(\Pi,G^+)^{\epsilon} \to\Hom_{n,d}(\tilde{\Pi},G)^{\sigma,+} $$ is thus a principal $\mathbb{G}_m$-bundle. The $G$-action on $\Hom_{n,d}(\Pi,G^+)^{\epsilon}$  defined by the Formula (\ref{action}) induces an action of the center $Z_G=\mathbb{G}_m$ which differs from the one coming from the principal $\mathbb{G}_m$-bundle structure by a square factor. The morphism $\Hom_{n,d}^{\epsilon}(\Pi,G^+) \to \Hom_{n,d}(\tilde{\Pi},G)^{\sigma,+}$ induces thus a $G$-equivariant isomorphism \begin{equation}
    \label{isom2}
    \Hom_{n,d}(\Pi,G^+)^{\epsilon}/(\mathbb{G}_m/(\pm I_n)) \cong \Hom_{n,d}(\tilde{\Pi},G)^{\sigma,+} 
\end{equation}

We deduce the following chain of isomorphisms: $$M^{\epsilon}_{n,d}=\Hom_{n,d}(\Pi,G^+)^{\epsilon}/(G/(\pm I_n)) \cong  (\Hom_{n,d}(\Pi,G^+)^{\epsilon}/(\mathbb{G}_m/(\pm I_n)))/(G/\mathbb{G}_m) \cong M^{\sigma,+}_{n,d} .$$
\end{proof}

To end the proof of Proposition \ref{connc}, it actually remains to show that  $M_{n,d}^{\sigma,+},M_{n,d}^{\sigma,-}$ are  isomorphic if $r$ is even. For $r$ even there exists a quaternionic representation $\tau \in M_{1,0}^{\sigma,-}$ of rank $1$ over $\Tilde{\Sigma}$ (see  \cite[Theorem 2.4]{FS2}). Taking the tensor product by $\tau$ gives then an isomorphism $- \otimes \tau:M_{n,d}^{\sigma,+} \to M_{n,d}^{\sigma,-}$: the same proof was carried out for real and quaternionic vector bundles in \cite[Theorem 1.1]{FS2}.

\subsection{Character stacks for (real) elliptic curves}

\label{mainresult}

We focus now on the case $r=2$. We consider the elliptic curve $\Tilde{E}$ and the antiholomorphic involution $\sigma$ introduced in Example \ref{ellip}. We keep the notations introduced in the  Example \ref{ellip}. 

 In \cite[Lemma 2.2.6]{HRV} it is shown that for $(n,d)=1$ there is an isomorphism  \begin{equation}
 \label{ziopera}
 M_{n,d}=\C^* \times \C^* .\end{equation}
  To see this, notice that a representation $\tilde{\rho} \in M_{n,d}$ corresponds to a pair of matrices $A,B$ such that $$B^{-1}A^{-1}BA=e^{\frac{2\pi i d}{n}}1_n .$$ where $\tilde{\rho}(a)=A$ and $\tilde{\rho}(b)=B$. Let $z,w \in \C^*$ such that $A^n=z I_n $ and $B^n=wI_n$(see \cite[Theorem 2.2.17]{HRV}). The isomorphism  (\ref{ziopera}) is obtained by mapping $\tilde{\rho}$ to the couple $(z,w)$. Via this identification, the involution $\sigma$ is given by:  $$\sigma(z,w)=(z,w^{-1}) $$ and so $M_{n,d}^{\sigma}=\C^* \bigsqcup \C^* .$ From Equation (\ref{eqell1}) we deduce indeed that $$\theta\tilde{\rho}(\sigma_*(b))=\theta(\tilde{\rho}(b))=\theta(B)  $$ and so $(\theta\tilde{\rho}(\sigma_*(b)))^n=\theta(B)^n=w^{-1}I_n$. By Equation (\ref{eqell2}) the following equality holds: $$\theta\tilde{\rho}(\sigma_*(a))=\theta(\tilde{\rho}(x_1a^{-1}))=\theta(\tilde{\rho}(x_1))\theta(A^{-1})=e^{-\frac{\pi d}{n}}A^t $$ and so $(\theta(\tilde{\rho}(\sigma_*(a)))^n=(A^t)^n=zI_n .$ By Proposition \ref{connc}, we deduce the following result :
 \begin{teorema}
 \label{ellip1}
 For $r=2$, the character variety $M_{n,d}^{\epsilon}$ is isomorphic to $\C^*$ as an affine variety and  the character stack $\mathcal{M}_{n,d}^{\epsilon} $ is a $\mu_2$-gerbe over $\C^*$. 
\end{teorema}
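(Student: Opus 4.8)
The plan is to combine \Cref{connc}, which gives an isomorphism $M_{n,d}^{\epsilon}\cong M_{n,d}^{\sigma,+}$, with an explicit description of the involution $\sigma$ on $M_{n,d}$. First I would recall from \cite[Lemma 2.2.6, Theorem 2.2.17]{HRV} that a representation $\tilde{\rho}\in M_{n,d}$ is determined (up to isomorphism) by the scalars $z,w\in\C^*$ with $\tilde{\rho}(a)^n=zI_n$, $\tilde{\rho}(b)^n=wI_n$, yielding $M_{n,d}\cong\C^*\times\C^*$; this uses crucially that $(n,d)=1$ forces irreducibility, so that $A^n$ and $B^n$, being central by the commutator relation, are genuine scalars. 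The main work is then to compute how $\sigma(\tilde{\rho})=\theta\tilde{\rho}(\sigma_*)$ acts on the pair $(z,w)$. Using the formulas $\sigma_*(a)=x_1a^{-1}$ and $\sigma_*(b)=b$ from \Cref{ellip} (Equations (\ref{eqell1}),(\ref{eqell2})), together with $\tilde{\rho}(x_1)=e^{\pi i d/n}I_n$ (the generic central orbit), one gets $\theta\tilde{\rho}(\sigma_*(b))=\theta(B)=(B^{-1})^t$, whose $n$-th power is $w^{-1}I_n$, and $\theta\tilde{\rho}(\sigma_*(a))=e^{-\pi i d/n}A^t$, whose $n$-th power is $e^{-\pi i d}\,(A^t)^n=e^{-\pi i d}z I_n=zI_n$ since $d$ is even. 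Hence $\sigma(z,w)=(z,w^{-1})$.

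From this the fixed-point locus $M_{n,d}^{\sigma}$ is $\{w=w^{-1}\}=\{w=\pm1\}$, i.e. two disjoint copies of $\C^*$ (parametrized by $z$), matching the open-closed decomposition $M_{n,d}^{\sigma}=M_{n,d}^{\sigma,+}\bigsqcup M_{n,d}^{\sigma,-}$ of \Cref{connc}(i). Each piece is isomorphic to $\C^*$ as an affine variety. Applying \Cref{connc}(ii), $M_{n,d}^{\epsilon}\cong M_{n,d}^{\sigma,+}\cong\C^*$. Finally, as already noted in \Cref{geomd}, since $d$ and $n$ are coprime every $\tilde{\rho}\in\mathcal{M}_{n,d}$ is irreducible and the stabilizer in $G$ of the corresponding $\rho\in\Hom^{\epsilon}_{\mathcal{C}}(\Pi,G^+)$ is $\{\pm I_n\}=\mu_2$ (by \cite[III.5.1.3]{cheng2}); hence $\mathcal{M}_{n,d}^{\epsilon}$ is a $\mu_2$-gerbe over its GIT quotient $M_{n,d}^{\epsilon}\cong\C^*$.

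I expect the only genuinely delicate point to be verifying that the labeling $\{w=+1\}$ versus $\{w=-1\}$ correctly corresponds to real (respectively quaternionic) representations, i.e. that $M_{n,d}^{\sigma,+}$ is one specific component rather than the other; but for the statement as phrased this does not matter, since \Cref{connc} already guarantees $M_{n,d}^{\sigma,+}\cong M_{n,d}^{\sigma,-}\cong\C^*$ when $r=2$ is even, so either component gives the claimed $\C^*$. The remaining steps — the $n$-th power computations and the observation that $e^{-\pi i d}=1$ for even $d$ — are routine. One should double-check that the identification $M_{n,d}\cong\C^*\times\C^*$ is compatible, as an abstract variety isomorphism, with the scheme structure on the GIT quotient, but this is exactly the content of \cite[Lemma 2.2.6]{HRV} and may be cited.
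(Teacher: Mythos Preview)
Your proposal is correct and follows essentially the same approach as the paper: identify $M_{n,d}\cong\C^*\times\C^*$ via \cite{HRV}, compute $\sigma(z,w)=(z,w^{-1})$ from the explicit formulas for $\sigma_*$ in Example~\ref{ellip}, and then invoke Proposition~\ref{connc} together with the stabilizer computation to conclude. You are in fact slightly more explicit than the paper in noting that $e^{-\pi i d}=1$ requires $d$ even, and in flagging (correctly, and correctly dismissing) the question of which component is $M_{n,d}^{\sigma,+}$.
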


 By Remark \ref{cohomology}, for $r=2$  the following identity holds: \begin{equation}H_c(\mathcal{M}_{n,d}^{\epsilon},q,t)=qt^2+t. \end{equation}

As suggested in the introduction, this does not agree with the expected formula (\ref{falseconj}). If the Formula (\ref{falseconj}) were true, the following identity would hold $$H_c(\mathcal{M}_{n,d}^{\epsilon},q,t)=\dfrac{qt^2}{qt^2-1}\mathbb{H}_{n,2}\left(t\sqrt{q},-\dfrac{1}{\sqrt{q}}\right)$$ where $\mathbb{H}_{n,2}(z,w)$ are the functions defined by Equation (\ref{formula}) for $\mu=((n))$. The functions $\mathbb{H}_{n,2}(z,w)$ have been explicitly computed  in \cite[Theorem 1.0.2]{Carlsson}. The result of \cite{Carlsson} agrees with the conjectural formula (\ref{mhp}) for the mixed Poincar\'e series of character varieties $\mathcal{M}_{n,d}$ for elliptic curves, i.e $(qt^2)\mathbb{H}_{n,2}\left(t\sqrt{q},-\dfrac{1}{\sqrt{q}}\right)=(qt^2+t)^2$. This implies that \begin{equation}\dfrac{(qt^2+t)^2}{qt^2-1}=\dfrac{qt^2}{qt^2-1}\mathbb{H}_{n,2}\left(t\sqrt{q},-\dfrac{1}{\sqrt{q}}\right) \neq (qt^2+t) \end{equation} giving a counterexample to the  conjectural formula (\ref{falseconj}).
\vspace{8 pt}

\end{document}